\documentclass[12pt,reqno]{amsart} 

\usepackage{amsmath,amssymb,rawfonts}
\usepackage{tikz}
\usepackage{mathrsfs}
\usepackage{scalerel}
\usepackage{tikz-cd}
\usetikzlibrary{cd}

\usetikzlibrary{arrows, matrix}
\usepackage{mathtools}
\usepackage{stmaryrd}

\newtheorem{theorem}{Theorem}[section]

\newtheorem{defi}[theorem]{Definition}
\newtheorem{prop}[theorem]{Proposition}
\newtheorem{coro}[theorem]{Corollary}

\newtheorem{ex}[theorem]{Example}

\pagestyle{plain}
\title{FILTERS AND $\mathfrak{G}$-CONVERGENCE IN CATEGORIES}
\author{Joaqu\'in Luna-Torres}
\thanks{Programa de Matem\'aticas, Universidad Distrital Francisco Jos\'e de Caldas,  Bogot\'a D. C., Colombia (retired professor)}

\email{ jlunator@fundacionhaiko.org}
\subjclass{06A05, 18A35, 18D20, 5420}
\keywords{ Filter, Base of a filter, Ultrafilter, Systems of neighborhood, cover-neighborhood,  $\mathfrak G$-neighborhood, Grothendieck topology, Convergence}

\begin{document} 

\begin{abstract}
In analogy with the classical theory of filters, for  fi\-nite\-ly com\-plete cat\-e\-go\-ries, we provide  the concepts of fil\-ter, $\mathfrak{G}$-neigh\-bor\-hood (short for ``Grothendieck-neigh\-bor\-hood")  and  cover-neigh\-bor\-hoodof a point,  with the aim of studying convergence, cluster point and closure of  sieves on objects  of that kind of categories.
\end{abstract}

\maketitle 

\section{ Introduction}
Convergence theory offers a versatile and effective framework for some areas of mathematics. Let us start by saying a few words about the history of this concept.

It was defined for the first time probably by Henri Cartan \cite{HC}.
Although the notion of a limit along a filter was defined in this work, in the maximal possible generality – the considered filter could be a filter on an arbitrary set and the limit was defined for any map from this set to a topological space – the attention of mathematicians in the following years was mostly focused to two special cases.

\begin{itemize}
\item In general topology the notion of limit of a filter on a topological space $X$ became one of the two basic tools used to describe the convergence in general topological spaces together with the notion of a net.
Some authors studied also the convergence of a sequence along a filter.  
 
\item The definition of the limit along a filter can be reformulated using ideals – the dual notion to
the notion of filter. This type of limit of sequences was introduced independently by P. Kostyrko et alt., \cite{KM}
 and F. Nuray and W. H. Ruckle \cite{NR} and studied under the name {\it{`` I-convergence"}}. The motivation for this direction of research was an effort to generalize some known results on statistical convergence. 
\end{itemize}
In category theory,  a sieve is a way of choosing arrows with a common codomain. It is a categorical analogue of a collection of open subsets of a fixed open set in topology. In a Grothendieck topology, certain sieves become categorical analogues of open covers in general topology. 

In this paper, we use the concept of sieve to build filters in categories and establish a link between filters and grothendieck topologies.

The paper is organized as follows: We describe, in section $2$, the notion of sieve as in S. MacLane and I. Moerdijk \cite{MM} . In section $3$, we present the concepts of filters, base of filters, we  study the lattice structure of  all filters on a category and we present the concept of ultrafilter; after that, in section $4$,  we introduce the concepts of Systems of neighborhood,$\mathfrak{G}$-neighborhood  of a point, cover-neighborhood, convergence, cluster point and closure of a sieve and some theorems about them.

\section{Theoretical Considerations}
Throughout this paper, we will work within an ambient category $\mathscr{C} $ which is finitely complete.

Remember that if $\mathscr{C} $ is a category and $C$ is an object of  $\mathscr{C} $, {\bf a sieve} 
$S: \mathscr{C}^{op} \rightarrow Set$ on $C$ is a subfunctor of $Hom_{\mathscr{C}} (-,C)$

If we denote by  $Sieve_{\mathscr{C}}(C)$ (or $Sieve(C)$ for short)  the set of all sieves on $C$, then $Sieve(C)$ becomes a partially ordered under inclusion. It is easy to see that the union or intersection of any family of sieves on $C$ is a sieve on $C$, so $Sieve(C)$ is a complete lattice.

From S. MacLane and I. Moerdijk \cite{MM}, Chapter III, we have the following:
\begin{itemize}
\item Let $\mathscr{C} $ be a category, and let $Sets^{\mathscr{C}^{op}}$  be the corresponding functor category and let 
\begin{align*}
y: &\mathscr{C} \rightarrow  Sets^{\mathscr{C}^{op}}\\
&C\mapsto \mathscr{C}(-,C)
\end{align*}
then a sieve $\mathcal S$ on $\mathscr{C}$ is simply a subobject  $\mathcal S\subseteq y(C)$ in $Sets^{\mathscr{C}^{op}}$.

Alternatively, $\mathcal S$ may be given as a family of morphisms in $\mathscr{C}$, all with codomain $C$, such that
\[
f \in  \mathcal S \Longrightarrow f\circ g \in \mathcal S
\]
whenever this composition makes sense; in other words, $\mathcal S$ is a right ideal. 

\item If $\mathcal S$ is a sieve on $\mathscr C$ and $h: D\rightarrow C$ is any arrow to $C$, then  $h^{*}(\mathcal S) = \{g \mid cod(g) = D, h\circ g \in \mathcal S\}$  is a sieve on $D$.
\end{itemize}

\section{Filters on a category}
\begin{defi}
 A filter on a category $\mathscr{C}$ is a function $\mathfrak F$ which assigns to each object $C$ of $\mathscr{C}$ a  collection $\mathfrak F(C) $ of sieves on $\mathscr{C}$, in such a way that
\begin{enumerate}
\item [($F_1$)] If $S \in \mathfrak F(C)$ and $ R$ is a sieve on $C$ such that $ S \subseteq R$, then  $ R \in \mathfrak F(C)$;
\item [($F_2$)] every finite intersection of sieves of $\mathfrak F(C)$ belongs to $\mathfrak F(C)$;
\item [($F_3$)] the empty sieve is not in $\mathfrak F(C)$.
\end{enumerate}
\end{defi}
The pair $(C, \mathfrak F(C))$ is called {\bf a filtered object}.
\begin{ex}
From the definition of a Grothendieck topology $J$ on a category $\mathscr{C}$  it follows that for each object $C$ of $\mathscr{C}$ and that
\begin{itemize}
\item for $S\in J(C)$ any larger sieve $R$ on $C$ is also a member of $J(C)$. 
\item It is also clear that if $R; S\in J(C)$ then $R\cap S \in J(C)$, 
\item consequently a Grothendieck topology produces a filter in the same category $\mathscr{C}$; it is enough to remove the empty sieve of $J(C)$. 
\end{itemize}
\end{ex}
\begin{defi}
A filter subbase on a category $\mathscr{C}$ is a function $\mathfrak S$ which assigns to each object $C$ of $\mathscr{C}$ a  collection $\mathfrak S(C) $ of sieves on $\mathscr{C}$,  in such a way that no finite subcollection of $\mathfrak S(C) $ has an empty intersection.
\end{defi}
An immediate consequence of this definition is
\begin{prop}
A sufficient condition that there should exist a filter $\mathfrak S^{'}$  on a category $\mathscr{C}$ greater than or equal to  a function $\mathfrak S$ (as above) is that $\mathscr{C}$ should be a filter subbase on  $\mathscr{C}$.
\end{prop}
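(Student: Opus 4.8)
The plan is to imitate the classical construction of the filter generated by a subbase, performed objectwise on sieves. For each object $C$ of $\mathscr{C}$, first form the collection $\mathfrak{B}(C)$ consisting of all sieves on $C$ that arise as a finite intersection of members of $\mathfrak{S}(C)$, adopting the convention that the empty intersection is the maximal sieve $Hom_{\mathscr{C}}(-,C)$ so that $\mathfrak{B}(C)$ is never empty. Since the intersection of finitely many sieves on $C$ is again a sieve on $C$ — this is precisely the remark of Section~2 that $Sieve(C)$ is a complete lattice — every element of $\mathfrak{B}(C)$ is a genuine sieve, and $\mathfrak{B}(C)$ is closed under finite intersections, because an intersection of finitely many finite intersections of members of $\mathfrak{S}(C)$ is again a finite intersection of members of $\mathfrak{S}(C)$. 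Then define $\mathfrak{S}'(C)$ to be the up-set generated by $\mathfrak{B}(C)$ inside $Sieve(C)$: a sieve $R$ on $C$ lies in $\mathfrak{S}'(C)$ if and only if $B \subseteq R$ for some $B \in \mathfrak{B}(C)$.

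Next I would check that $\mathfrak{S}'$ satisfies the filter axioms $(F_1)$--$(F_3)$. Axiom $(F_1)$ is immediate from the definition of $\mathfrak{S}'(C)$ as an up-set together with transitivity of inclusion. For $(F_2)$, given $R_1,\dots,R_n \in \mathfrak{S}'(C)$, choose $B_i \in \mathfrak{B}(C)$ with $B_i \subseteq R_i$; then $\bigcap_i B_i \subseteq \bigcap_i R_i$ and $\bigcap_i B_i \in \mathfrak{B}(C)$ by the closure property just noted, hence $\bigcap_i R_i \in \mathfrak{S}'(C)$. Axiom $(F_3)$ is the only point where the hypothesis enters: were the empty sieve to belong to $\mathfrak{S}'(C)$, it would contain some $B \in \mathfrak{B}(C)$, forcing $B$ to be the empty sieve; but $B$ is a finite intersection of members of $\mathfrak{S}(C)$, so this would exhibit a finite subcollection of $\mathfrak{S}(C)$ with empty intersection, contradicting the assumption that $\mathfrak{S}$ is a filter subbase on $\mathscr{C}$. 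Finally, $\mathfrak{S}' \geq \mathfrak{S}$ holds because every $S \in \mathfrak{S}(C)$ is the one-term intersection of itself, whence $S \in \mathfrak{B}(C) \subseteq \mathfrak{S}'(C)$.

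I do not expect a serious obstacle here, as the argument runs exactly parallel to the point-set case; the only matters requiring a little care are bookkeeping ones. One must handle the empty-intersection convention consistently, so that $\mathfrak{S}'(C)$ is nonempty (and in fact contains the maximal sieve) even when $\mathfrak{S}(C)$ happens to be empty, which is needed for $\mathfrak{S}'$ to qualify as a filter at all; and one must note once and for all that every set-theoretic operation on sieves invoked above stays within $Sieve(C)$. It is also worth remarking, though it is not demanded by the statement, that $\mathfrak{S}'$ is the \emph{smallest} filter above $\mathfrak{S}$: any filter whose value at $C$ contains $\mathfrak{S}(C)$ must by $(F_2)$ contain $\mathfrak{B}(C)$ and then by $(F_1)$ contain the up-set $\mathfrak{S}'(C)$.
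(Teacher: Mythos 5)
Your construction is correct and is exactly the argument the paper leaves implicit (the proposition is stated there as an ``immediate consequence,'' with no written proof): close $\mathfrak S(C)$ under finite intersections to get a base $\mathfrak B(C)$, take the generated up-set, and use the subbase condition only to rule out the empty sieve --- precisely the machinery of the paper's subsequent definition of a basis of a filter and the filter it generates. Your closing remark that $\mathfrak S'$ is the smallest such filter also supplies the justification for the paper's observation that $\mathfrak S'$ is the coarsest filter greater than $\mathfrak S$, so nothing is missing.
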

Observe that $\mathfrak S^{'}$ is the coarset filter greater than $\mathfrak S$.
\begin{defi}
A basis of a filter on a category $\mathscr{C}$ is a function $\mathfrak B$ which assigns to each object $C$ of $\mathscr{C}$ and   collection $\mathfrak B(C) $ of sieves on $\mathscr{C} $, in such a way that 
\begin{enumerate}
\item[($B_1)$] The intersection of two sieves of $\mathfrak B(C)$ contains a sieve of $\mathfrak B(C)$;
\item[$(B_2)$] $\mathfrak B(C)$ is not empty, and the empty sieve is not in $\mathfrak B(C)$.
\end{enumerate}
\end{defi}
\begin{prop}
If $\mathfrak B$ is a basis of filter on a category $\mathscr{C} $, then  $\mathfrak B$ generates a filter $\mathfrak F$ by
\[
S \in \mathfrak F(C) \Leftrightarrow \exists R\in \mathfrak B(C)\,\ \text{such that}\,\ R\subseteq S
\]
for each object $C$ of $\mathscr{C} $.
\end{prop}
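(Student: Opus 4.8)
The plan is to verify directly the three filter axioms $(F_1)$, $(F_2)$, $(F_3)$ for the assignment $C \mapsto \mathfrak F(C)$, using only the two basis axioms $(B_1)$ and $(B_2)$ together with the fact recorded in Section~2 that any intersection of sieves on $C$ is again a sieve on $C$ (so that $\mathfrak F(C)$ does consist of sieves on $C$, as required). Throughout, the witnessing clause ``$\exists R \in \mathfrak B(C)$ with $R \subseteq S$'' is applied and re-applied; there is essentially nothing beyond bookkeeping with inclusions.

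First I would dispatch $(F_1)$: given $S \in \mathfrak F(C)$ and a sieve $R$ on $C$ with $S \subseteq R$, choose $R_0 \in \mathfrak B(C)$ with $R_0 \subseteq S$; then $R_0 \subseteq R$ by transitivity of $\subseteq$, so $R \in \mathfrak F(C)$.

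Next, for $(F_2)$, I would argue by induction on the cardinality of the finite family, the base case being the empty intersection. The empty finite intersection is the maximal sieve $y(C)$, and since $\mathfrak B(C)$ is nonempty by $(B_2)$, any $R \in \mathfrak B(C)$ satisfies $R \subseteq y(C)$, whence $y(C) \in \mathfrak F(C)$. For the inductive step it suffices to handle a binary intersection: if $S_1, S_2 \in \mathfrak F(C)$, pick $R_1, R_2 \in \mathfrak B(C)$ with $R_i \subseteq S_i$; by $(B_1)$ there is $R_3 \in \mathfrak B(C)$ with $R_3 \subseteq R_1 \cap R_2 \subseteq S_1 \cap S_2$, and since $S_1 \cap S_2$ is a sieve on $C$ this gives $S_1 \cap S_2 \in \mathfrak F(C)$.

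Finally, $(F_3)$: if the empty sieve belonged to $\mathfrak F(C)$, there would be $R \in \mathfrak B(C)$ with $R \subseteq \varnothing$, forcing $R = \varnothing$ and contradicting $(B_2)$. I do not expect any genuine obstacle; the only point deserving care is to remember that the ``empty finite intersection'' appearing in $(F_2)$ is the top sieve $y(C)$, so that the nonemptiness half of $(B_2)$ is actually needed, and — if one wants to be scrupulous — to note in passing that the construction is functorial-free here, i.e.\ $\mathfrak F$ is just a function on objects, so no compatibility with the pullback operation $h^{*}$ is being asserted at this stage.
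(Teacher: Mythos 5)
Your verification is correct and is exactly the routine check that the paper itself leaves to the reader (it only remarks ``It is easy to check that this, indeed, defines a filter from a basis $\mathfrak B$''): $(F_1)$ by transitivity of inclusion, $(F_2)$ by $(B_1)$ reduced to the binary case, and $(F_3)$ by the second half of $(B_2)$. Your added care about the empty intersection being the maximal sieve $y(C)$, which is where the nonemptiness part of $(B_2)$ enters, is a sensible refinement of the same argument, not a different route.
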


It is easy to check that this, indeed, defines a filter from a basis $\mathfrak B$.

\subsection{The ordered set of all filters on a category}
\begin{defi}
Given two filters $\mathfrak F_1$,\,\ $\mathfrak F_2$ on the same category $\mathscr{C}$,  $\mathfrak F_2$ is said to be finer than $\mathfrak F_1$, or $\mathfrak F_1$ is coarser than  $\mathfrak F_2$, if  $$\mathfrak F_1(C) \subseteq  \mathfrak F_2(C)$$ for all  $C$  object of  $\mathscr{C}$.
\end{defi}
In this way, the set of all filters on a category $\mathscr{C}$ is ordered by the relation {\bf{ ``$\mathfrak F_1$ is coarser than  $\mathfrak F_2$"}}.

Let $(\mathfrak F_i)_{i\in I}$ be a nonempty family of filters on a category $\mathscr{C}$; then the function $\mathfrak F$ which assigns to each object $C$ the collection $$\mathfrak F(C)=\bigcap_{i\in I}\mathfrak F_i(C)$$ is manifestly a filter on  $\mathscr{C}$ and is obviously the greatest lower bound of the family $(\mathfrak F_i)_{i\in I}$ on the ordered set of all filters on   $\mathscr{C}$.
\begin{defi}
An ultrafilter on  a category $\mathscr{C}$ is a filter $\mathfrak U$ such that there is no filter on $\mathscr{C}$ which is strictly finer than $\mathfrak U$..
\end{defi}
Using the Zorn lemma, we deduce that
\begin{prop}
If $\mathfrak F$ is any filter on a category $\mathscr{C}$,  there is an ultrafilter finer than $\mathfrak F$ on $\mathscr{C}$.
\end{prop}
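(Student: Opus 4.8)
The plan is to run the standard Zorn's lemma argument, now in the setting of filters on $\mathscr{C}$. First I would let $\Phi$ be the collection of all filters $\mathfrak{G}$ on $\mathscr{C}$ that are finer than $\mathfrak{F}$, i.e. those with $\mathfrak{F}(C)\subseteq\mathfrak{G}(C)$ for every object $C$ of $\mathscr{C}$, partially ordered by the ``coarser than'' relation introduced above. This set is nonempty, since $\mathfrak{F}$ itself belongs to $\Phi$.

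Next I would check the chain hypothesis of Zorn's lemma. Let $(\mathfrak{F}_j)_{j\in J}$ be a nonempty totally ordered subfamily of $\Phi$, and define $\mathfrak{G}$ by $\mathfrak{G}(C)=\bigcup_{j\in J}\mathfrak{F}_j(C)$ for each object $C$. I claim $\mathfrak{G}$ is a filter. Axiom $(F_1)$ is immediate: if $S\in\mathfrak{G}(C)$ then $S\in\mathfrak{F}_j(C)$ for some $j$, and any larger sieve $R$ on $C$ then lies in $\mathfrak{F}_j(C)\subseteq\mathfrak{G}(C)$. Axiom $(F_3)$ holds because the empty sieve lies in none of the $\mathfrak{F}_j(C)$. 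The key point is $(F_2)$: given finitely many sieves $S_1,\dots,S_n\in\mathfrak{G}(C)$, each $S_i$ belongs to some $\mathfrak{F}_{j_i}(C)$, and since the family is \emph{totally ordered} one of these filters, say $\mathfrak{F}_{j_k}$, is finer than all the others; hence $S_1,\dots,S_n\in\mathfrak{F}_{j_k}(C)$ and therefore $S_1\cap\cdots\cap S_n\in\mathfrak{F}_{j_k}(C)\subseteq\mathfrak{G}(C)$. Finally $\mathfrak{F}(C)\subseteq\mathfrak{F}_j(C)\subseteq\mathfrak{G}(C)$ for any fixed $j$, so $\mathfrak{G}\in\Phi$, and $\mathfrak{G}$ is clearly an upper bound of the chain in $\Phi$.

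With the chain condition verified, Zorn's lemma produces a maximal element $\mathfrak{U}$ of $\Phi$. By membership in $\Phi$, $\mathfrak{U}$ is finer than $\mathfrak{F}$. It remains to see that $\mathfrak{U}$ is an ultrafilter. If some filter $\mathfrak{V}$ on $\mathscr{C}$ were strictly finer than $\mathfrak{U}$, then, ``finer than'' being transitive, $\mathfrak{V}$ would be finer than $\mathfrak{F}$, so $\mathfrak{V}\in\Phi$; this contradicts the maximality of $\mathfrak{U}$ in $\Phi$. Hence no filter on $\mathscr{C}$ is strictly finer than $\mathfrak{U}$, that is, $\mathfrak{U}$ is an ultrafilter finer than $\mathfrak{F}$.

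The step I expect to be the real content is showing that the union along a chain is again a filter, and specifically that it satisfies $(F_2)$: an arbitrary union of filters need not be closed under finite intersections, so the total ordering of the chain is genuinely used there. A secondary, purely foundational point — which one may record in a remark — is that the argument treats ``the collection of all filters on $\mathscr{C}$'' as an honest set; for a small category $\mathscr{C}$, or under a suitable universe convention, this is unproblematic, and such a convention is implicitly in force throughout.
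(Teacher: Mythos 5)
Your proposal is correct and follows exactly the route the paper intends: the paper proves this proposition simply by invoking Zorn's lemma, and your argument fills in the standard details (chains of finer filters have unions as upper bounds, with total ordering used precisely to verify $(F_2)$, and maximality giving the ultrafilter property). No gap here; you have merely made explicit what the paper leaves to the reader.
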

\begin{prop}
Let $\mathfrak{U}$ be an ultrafilter on a category $\mathscr{C}$, and let $C$ be an object of $\mathscr{C}$. Let $ S,T$ be sieves on $C)$ such that $S\cup T \in \mathfrak{U}(C)$ then either $S \in \mathfrak{U}(C)$ or $T \in \mathfrak{U}(C)$.
\end{prop}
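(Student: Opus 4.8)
The plan is to argue by contradiction, following the classical proof that every ultrafilter is prime. Suppose that $S \notin \mathfrak{U}(C)$ and $T \notin \mathfrak{U}(C)$. Since $S \cup T \in \mathfrak{U}(C)$, the collection $\mathfrak{U}(C)$ is nonempty, so by $(F_1)$ the maximal sieve $\mathrm{Hom}_{\mathscr{C}}(-,C)$ belongs to $\mathfrak{U}(C)$; this small observation will be used twice.

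The main step is the following claim: \emph{if $R$ is a sieve on $C$ with $R \notin \mathfrak{U}(C)$, then there exists $U \in \mathfrak{U}(C)$ with $R \cap U = \varnothing$} (the empty sieve). To prove it I would assume instead that $R \cap U \neq \varnothing$ for every $U \in \mathfrak{U}(C)$ and then manufacture a filter strictly finer than $\mathfrak{U}$, contradicting maximality. Concretely, define $\mathfrak{F}$ by $\mathfrak{F}(D) = \mathfrak{U}(D)$ for every object $D \neq C$, and
\[
\mathfrak{F}(C) = \{\, P \text{ a sieve on } C \mid \exists\, U \in \mathfrak{U}(C),\ R \cap U \subseteq P \,\}.
\]
One checks the filter axioms object by object: for $D \neq C$ they hold because $\mathfrak{U}$ is a filter; at $C$, axiom $(F_1)$ is immediate, and $(F_2)$ follows from $(R\cap U_1)\cap(R\cap U_2) = R\cap(U_1\cap U_2)$ together with $U_1\cap U_2 \in \mathfrak{U}(C)$; and $(F_3)$ at $C$ is precisely where the contrapositive assumption is used. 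Moreover $\mathfrak{U}(C) \subseteq \mathfrak{F}(C)$ (take $U = P$) and $R \in \mathfrak{F}(C)$ (take $U = \mathrm{Hom}_{\mathscr{C}}(-,C)$), so $\mathfrak{F}$ is a filter strictly finer than $\mathfrak{U}$ — contradiction. This proves the claim.

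Applying the claim to $R = S$ and to $R = T$ yields $U, V \in \mathfrak{U}(C)$ with $S \cap U = \varnothing$ and $T \cap V = \varnothing$. By $(F_2)$ the sieve $(S \cup T) \cap U \cap V$ lies in $\mathfrak{U}(C)$; but
\[
(S \cup T) \cap U \cap V = (S \cap U \cap V) \cup (T \cap U \cap V) \subseteq (S \cap U) \cup (T \cap V) = \varnothing ,
\]
so the empty sieve belongs to $\mathfrak{U}(C)$, contradicting $(F_3)$. Hence $S \in \mathfrak{U}(C)$ or $T \in \mathfrak{U}(C)$.

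The only delicate point — and the step I expect to demand the most care — is verifying that the auxiliary assignment $\mathfrak{F}$ genuinely satisfies the Definition of a filter, in particular checking $(F_3)$ at the object $C$ (which is exactly what the assumption $R\cap U \neq \varnothing$ for all $U$ guarantees) and confirming that altering $\mathfrak{U}$ at the single object $C$ leaves the filter axioms intact at all other objects. Everything else reduces to routine manipulation of unions and intersections of sieves.
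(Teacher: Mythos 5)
Your proof is correct, but it takes a genuinely different route from the paper's. The paper follows the Bourbaki-style ``union trick'': assuming $S,T\notin\mathfrak{U}(C)$, it forms the auxiliary assignment $\mathfrak{T}(C)=\{R\in Sieve(C)\mid R\cup S\in\mathfrak{U}(C)\}$, checks the filter axioms directly (the empty sieve is excluded precisely because $S\notin\mathfrak{U}(C)$), and observes that $\mathfrak{T}$ is strictly finer than $\mathfrak{U}$ since it contains $T$ --- contradicting maximality in a single step. You instead first establish the disjointness dichotomy (a sieve not in $\mathfrak{U}(C)$ must be disjoint from some member of $\mathfrak{U}(C)$), proving it by the ``generated filter'' construction $\mathfrak{F}(C)=\{P\mid \exists U\in\mathfrak{U}(C),\ R\cap U\subseteq P\}$, and then finish with two applications of the lemma plus distributivity and $(F_2)$, $(F_3)$. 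Both arguments turn on the same engine --- manufacturing a strictly finer filter to contradict ultrafilter maximality --- but your decomposition yields a reusable characterization (a sieve meeting every member of an ultrafilter already belongs to it), at the cost of an extra combinatorial step; the paper's construction is shorter because the auxiliary filter contains $T$ outright. Your version is also more scrupulous on two points the paper glosses over: you specify the auxiliary filter at all objects $D\neq C$ (the paper only defines $\mathfrak{T}$ at $C$), and you justify that the maximal sieve $\mathrm{Hom}_{\mathscr{C}}(-,C)$ lies in $\mathfrak{U}(C)$ before using it, which is needed to see that $R\in\mathfrak{F}(C)$ and hence that the extension is strict.
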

\begin{proof}
If the affirmation is false, there exist sieves $ S,T$ on $C$ that do not belong to $ \mathfrak{U}(C)$, but $S\cup T \in \mathfrak{U}(C)$.
Consider a function  $\mathfrak{T}:\mathscr{C}\rightarrow Set$  and let us build  the collection of sieves $R$ on $C$ such that $ S\cup R\in \mathfrak{U}(C)$.
Let us verify that $\mathfrak{T}$ is a filter on a  $\mathscr{C}$: Let  $C$ be an object of the category $\mathscr{C}$, and let $\mathfrak{T}(C)$ be the image of $C$  under $\mathfrak{T}$; in other words, $\mathfrak{T}(C)=\{ R\in \,\ Sieve(C)\mid R\cup S \in \mathfrak{U}(C)\}$, then 
\begin{enumerate}
\item [($F_1$)] if $R^{'} \in \mathfrak T(C)$ then $R^{'} \cup S\in \mathfrak{U}(C)$; and if  $ R^{''}$ is a sieve on $C$ such that $ R{'}\subseteq R{''}$, then  $ R^{''}\cup S \in \mathfrak U(C)$. Consequently $R^{''} \in \mathfrak T(C)$.
\item [($F_2$)] We must show that every finite intersection of sieves of $\mathfrak T(C)$ belongs to $\mathfrak T(C)$; indeed,
 let 
\[
(R_i)_{i=1,\cdots,n}
\]
 be a finite collection of sieves on $C$ such that 
\[
R_i\cup S\in \mathfrak{U}(C),\,\ \text{for all}\,\  i = 1. . . n,
\] 
then
\[
(R_1\cup S)\cap (R_2\cup S)\cap \cdots \cap (R_n\cup S) = \left(\bigcap_{i=1}^{n}R_i\right)\cup S\in \mathfrak{U}(C).
\]
which is equivalent to saying that 
\[
\left(\bigcap_{i=1}^{n}R_i\right) \in \mathfrak T(C).
\]
\item [($F_3$)] Evidently, the empty sieve is not in $\mathfrak T(C)$.
 \end{enumerate}
Therefore $\mathfrak{T}$ is a filter finer than $\mathfrak{U}$, since $T\in \mathfrak T(C)$; but this contradicts the hypothesis than 
$\mathfrak{U}$ is an ultrafilter.
\end{proof}
\begin{coro}
If the union  of a finite sequence $(S_i)_{i=1,\cdots,n}$ of sieves on $C$ belongs to the image, $\mathfrak{U}(C)$,  of an object $C$ under  an ultrafilter $\mathfrak{U}$, then at least one of the $S_i$ belongs to $\mathfrak{U}(C)$.
\end{coro}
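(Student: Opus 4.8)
The plan is a straightforward induction on $n$, using the preceding Proposition (the case of a union of two sieves) as the engine. The base case $n=1$ is vacuous: the ``union'' of the single sieve $S_1$ is $S_1$ itself, so the hypothesis $S_1 \in \mathfrak{U}(C)$ is already the conclusion. (Equivalently one may take $n=2$ as the base case, which is precisely the statement of the Proposition.)

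For the inductive step, suppose the claim holds for every union of $n-1$ sieves on $C$, and assume $S_1 \cup S_2 \cup \cdots \cup S_n \in \mathfrak{U}(C)$. First I would note that $T := S_1 \cup \cdots \cup S_{n-1}$ is again a sieve on $C$, since (as recalled in Section~2) the union of any family of sieves on $C$ is a sieve on $C$; hence $T \cup S_n = S_1 \cup \cdots \cup S_n \in \mathfrak{U}(C)$ is a union of \emph{two} sieves lying in $\mathfrak{U}(C)$. Applying the Proposition to $T$ and $S_n$, we get that either $T \in \mathfrak{U}(C)$ or $S_n \in \mathfrak{U}(C)$.

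In the second case we are immediately done, since $S_n$ is one of the $S_i$. In the first case, $T = S_1 \cup \cdots \cup S_{n-1} \in \mathfrak{U}(C)$ is a union of $n-1$ sieves on $C$, so the induction hypothesis yields some $i \in \{1,\dots,n-1\}$ with $S_i \in \mathfrak{U}(C)$. In either case at least one $S_i$ belongs to $\mathfrak{U}(C)$, completing the induction.

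There is no real obstacle here; the only point requiring a moment's care is the closure of sieves under finite unions, which guarantees that the two-sieve Proposition can legitimately be applied to the grouping $(S_1\cup\cdots\cup S_{n-1})\cup S_n$ at each stage.
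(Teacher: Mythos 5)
Your proof is correct and follows exactly the route the paper intends: the paper's proof of this corollary is simply the remark that it is ``a simple use of induction on $n$,'' and your induction, with the two-sieve Proposition applied to the grouping $(S_1\cup\cdots\cup S_{n-1})\cup S_n$, is the standard way to carry that out. No gaps; the only nitpick is the phrase ``a union of two sieves lying in $\mathfrak{U}(C)$,'' which should read that the union $T\cup S_n$ lies in $\mathfrak{U}(C)$, not the individual sieves.
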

\begin{proof}
The proof is a simple use of induction on $n$.
\end{proof}

\section{Systems of Neighborhoods}

Remember that a {\bf point} of  an object $C$ of a category $\mathscr{C}$ is a morphism $p:1\rightarrow C$, where $1$ is a terminal object of $\mathscr{C}$.

\begin{defi}
Let  $(\mathscr C, J)$ be a category endowed with a Grothendieck topology, and let $C$  be an object  of $\mathscr{C}$.
\begin{enumerate}
\item A sieve $V$ in $J(C)$,  is said to be a  $\mathfrak{G}$-neighborhood (short for ``Grothendieck-neighborhood") of a point $p_{\scriptscriptstyle C}$ of $C$ if there exist a morphism  $\phi:D\rightarrow C$ in  $V$ and  a point $q:1 \rightarrow D $ such that the diagram

\begin{center}
\begin{tikzcd}
D \arrow{r}{\phi} & C \\ 
  1\arrow{u}{q}\arrow{ur}[swap]{p}& 
\end{tikzcd}
\end{center}
is commutative.
\item A sieve $V$ in $J(C)$,  is said to be a  $\mathfrak{G}$-neighborhood of a sieve $T$ on $C$ if $T\subseteq V$.

\end{enumerate}
\end{defi}

\begin{defi}
Let  $(\mathscr C, J)$ be a category endowed with a Grothendieck topology. A cover-neighborhood of $(\mathscr C, J)$ is a function $\mathcal N$ which assigns to each object $(C, J(C))$ of $(\mathscr{C}, J)$ and to each point $p_{\scriptscriptstyle C}$ of $C$, a collection 
\[
\mathcal N_{\scriptstyle p_{_{\scriptscriptstyle C}}}(C)\,\ \text{of sieves of}\,\ \mathscr C
\]
such that each sieve in
 $\mathcal N_{\scriptstyle p_{_{\scriptscriptstyle C}}}(C)$ contains a $\mathfrak{G}$-neighborhood of $p_{\scriptscriptstyle C}$.
\end{defi}

\begin{theorem}
Let $\mathscr{C}$ be  a  category, and let $C$ be an object of $\mathscr{C}$. The pair $(C,\mathscr N_p(C))$, where  $\mathscr N_p(C)$ is the collection of all  cover-neighborhoods of a point $p$ of $C$,  is  {\bf a filtered object}
\end{theorem}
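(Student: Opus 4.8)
The tacit setting here is, as throughout this section, a category $\mathscr C$ equipped with a Grothendieck topology $J$, and I read ``$S\in\mathscr N_p(C)$'' as ``$S$ is a sieve on $C$ containing some $\mathfrak G$-neighborhood of $p$''. The plan is simply to check that the one-object assignment $\mathfrak F$ given by $\mathfrak F(C)=\mathscr N_p(C)$ satisfies the three filter axioms $(F_1)$, $(F_2)$, $(F_3)$ at $C$.

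First I would dispatch the two easy axioms. For $(F_1)$: if $S\in\mathscr N_p(C)$ and $S\subseteq R$ for a sieve $R$ on $C$, pick a $\mathfrak G$-neighborhood $V$ of $p$ with $V\subseteq S$; then $V\subseteq R$, so $R\in\mathscr N_p(C)$. For $(F_3)$: by the definition of a $\mathfrak G$-neighborhood, any such $V$ contains at least one morphism $\phi\colon D\to C$ and is therefore nonempty; since the only sieve contained in the empty sieve is the empty sieve itself, the empty sieve contains no $\mathfrak G$-neighborhood, hence is not in $\mathscr N_p(C)$. I would also record here that $\mathscr N_p(C)$ is nonempty: the maximal sieve on $C$ (all morphisms into $C$) lies in $J(C)$ and is a $\mathfrak G$-neighborhood of $p$, witnessed by $\phi=\mathrm{id}_C$ and $q=p$ in the defining diagram; this simultaneously settles the nullary case of the finite intersections in $(F_2)$.

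The real content is closure of $\mathscr N_p(C)$ under finite intersection, which by a routine induction reduces to the case of two sieves $S_1,S_2\in\mathscr N_p(C)$. Choose $\mathfrak G$-neighborhoods $V_i\subseteq S_i$ of $p$, witnessed by $\phi_i\colon D_i\to C$ in $V_i$ and points $q_i\colon 1\to D_i$ with $\phi_i\circ q_i=p$, for $i=1,2$. I claim $V:=V_1\cap V_2$ is a $\mathfrak G$-neighborhood of $p$ contained in $S_1\cap S_2$. That $V\in J(C)$ is the standard closure of a Grothendieck topology under binary intersection of covering sieves (recalled in the Example of Section 3), and $V\subseteq S_1\cap S_2$ is immediate. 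To produce the factorization of $p$ demanded by the definition, I would form the pullback $P=D_1\times_C D_2$ --- which exists since $\mathscr C$ is finitely complete --- with projections $\pi_1,\pi_2$ and common composite $\psi=\phi_1\circ\pi_1=\phi_2\circ\pi_2\colon P\to C$. Since $\phi_1\circ q_1=p=\phi_2\circ q_2$, the pair $(q_1,q_2)$ is a cone over the cospan defining $P$, hence factors through a (unique) point $q\colon 1\to P$ with $\pi_i\circ q=q_i$; then $\psi\circ q=\phi_1\circ\pi_1\circ q=\phi_1\circ q_1=p$. Moreover $\psi=\phi_1\circ\pi_1\in V_1$ and $\psi=\phi_2\circ\pi_2\in V_2$, because each $V_i$ is a right ideal containing $\phi_i$, so $\psi\in V$. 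Thus $V$, together with $\psi$ and $q$, satisfies the definition of a $\mathfrak G$-neighborhood of $p$, and from $V\subseteq S_1\cap S_2$ we conclude $S_1\cap S_2\in\mathscr N_p(C)$; this is $(F_2)$.

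Combining the three checks shows $\mathscr N_p(C)$ obeys $(F_1)$--$(F_3)$, i.e.\ $(C,\mathscr N_p(C))$ is a filtered object. The step that is not pure bookkeeping is the passage from $V_1,V_2$ to $V_1\cap V_2$ as a $\mathfrak G$-neighborhood, and that is where I expect the only real difficulty to lie; it rests on two ingredients already at our disposal --- finite completeness of $\mathscr C$, used both to build $D_1\times_C D_2$ and to lift $q_1,q_2$ to a single point of it, and closure of $J$ under intersection --- so that is the paragraph I would write out with the most care.
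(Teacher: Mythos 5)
Your proof is correct, and it checks the same three axioms $(F_1)$--$(F_3)$ in the same order as the paper; the difference lies entirely in $(F_2)$, which is where all the content of the theorem sits. The paper's own argument for $(F_2)$ merely picks $\mathfrak G$-neighborhoods $V_i\subseteq S_i$, notes $\bigcap V_i\subseteq\bigcap S_i$, and then asserts $\bigcap S_i\in\mathscr N_p(C)$ --- it never verifies that $\bigcap V_i$ (or any sieve inside $\bigcap S_i$) is again a $\mathfrak G$-neighborhood of $p$, which is exactly the nontrivial point. You supply that missing step: $V_1\cap V_2$ lies in $J(C)$ by closure of a Grothendieck topology under binary intersection of covering sieves (as recalled in the paper's Example), and the required factorization of $p$ through a morphism of $V_1\cap V_2$ is produced by forming the pullback $D_1\times_C D_2$ (using finite completeness), lifting $q_1,q_2$ to a point $q\colon 1\to D_1\times_C D_2$, and observing that the common composite $\phi_1\circ\pi_1=\phi_2\circ\pi_2$ lies in $V_1\cap V_2$ because each $V_i$ is a right ideal. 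This is precisely the place where the hypothesis that $\mathscr C$ is finitely complete is actually used, a point invisible in the paper's version. Your treatment of $(F_1)$ matches the paper's, and your argument for $(F_3)$ (a $\mathfrak G$-neighborhood contains the morphism $\phi$, hence is nonempty, so the empty sieve contains none) is cleaner than the paper's parenthetical ``each sieve contains a point'', which as stated is not literally what is needed. In short: same skeleton, but your pullback paragraph turns the paper's assertion into a proof, and it is indeed the paragraph to write out with care.
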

\begin{proof}\
\begin{enumerate}

\item [(i)] If $S\in\mathscr N_p(C)$ and $R$ is a sieve on $C$ such that $S \subseteq R$, then  $R\in\mathscr N_p(C)$, because there is a $\mathfrak{G}$-neighborhood $V$ of $p_{\scriptscriptstyle C}$ such that $V \subseteq S\subseteq R$;
\item[(ii)] let $\{S_1, S_2,\cdots,S_n\}$ be a finite col\-lec\-tion of sie\-ves of $\mathscr N_p(C)$, then there exists a col\-lec\-tion    $\{V_1, V_2,\cdots, V_n\}$  of $\mathfrak{G}$-neigh\-bor\-hood of $p_{\scriptscriptstyle C}$ such that $V_i\subseteq S_i$ for $I=1,2,\cdots n$, therefore
$$\bigcap_{i=1}^n V_n \subseteq\bigcap_{i=1}^n S_n \,\ \text{and}\,\ \bigcap_{i=1}^n S_n\in\in\mathscr N_p(C)$$;
\item[(iii)] the empty sieve is not in $\mathscr N_p(C)$ (each  sieve contains a point).
\end{enumerate}
\end{proof}
In this case, we say that the point $p$ of $C$ is a {\bf limit point} of $\mathscr N_p(C)$. 

\begin{defi}\label{converges}
Let  $(\mathscr C, J)$ be a  category endowed with a Grothendieck topology; let  $\mathfrak F$ be a filter on  $\mathscr{C}$ and let $C$  be an object  of $\mathscr{C}$. 
\begin{enumerate}
\item We shall say that $\mathfrak F(C) $ {\bf converges} to a point $p$ of $C$ if\linebreak $\mathscr N_p(C)\subseteq \mathfrak F(C) $.
\item The closure of a sieve $A$ on $C$ is the collection of all points $p$ of $C$ such that every cover-neighborhood of $p$ meets $A$.
\item A point $p$ of $C$ is a cluster point of $\mathfrak B(C)$, the image under the filter base $\mathfrak B$ of $C$, if it lies in the closure of all the sieves on $\mathfrak B(C)$.
\item A point $p$ of $C$ is a cluster point of $\mathfrak F(C)$, the image under the filter  $\mathfrak F$ of $C$, if it lies in the closure  of all the sieves on $\mathfrak F(C)$.
\end{enumerate}
\end{defi}
\begin{theorem}
Let  $(\mathscr C, J)$ be a  category endowed with a Grothendieck topology; let  $\mathfrak F$ be a filter on  $\mathscr{C}$ and let $C$  be an object  of $\mathscr{C}$. The point $p$ of $C$ is a cluster point of  $\mathfrak F(C)$ iif there is a filter  $\mathscr G$ finer than  $\mathfrak F$ such that $\mathscr G(C) $ {\bf converges} to $p$.
\end{theorem}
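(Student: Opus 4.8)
The plan is to prove the two implications separately: the ``if'' direction by a one-line computation with the filter axioms, and the ``only if'' direction by constructing an explicit finer filter out of a filter base.

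For the ``if'' direction, suppose $\mathscr G$ is a filter finer than $\mathfrak F$ with $\mathscr N_p(C)\subseteq\mathscr G(C)$. To see that $p$ is a cluster point of $\mathfrak F(C)$, take an arbitrary sieve $A\in\mathfrak F(C)$ and an arbitrary cover-neighborhood $N\in\mathscr N_p(C)$. Since $\mathscr G$ is finer than $\mathfrak F$ we have $A\in\mathscr G(C)$, and since $\mathscr G(C)$ converges to $p$ we have $N\in\mathscr G(C)$; hence $A\cap N\in\mathscr G(C)$ by $(F_2)$, and then $(F_3)$ forces $A\cap N$ to be a nonempty set of morphisms. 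Thus every cover-neighborhood of $p$ meets every member of $\mathfrak F(C)$, i.e. $p$ lies in the closure of each sieve of $\mathfrak F(C)$, which is precisely the definition of a cluster point of $\mathfrak F(C)$.

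For the ``only if'' direction, assume $p$ is a cluster point of $\mathfrak F(C)$. I would define $\mathscr G$ by $\mathscr G(C')=\mathfrak F(C')$ for every object $C'\neq C$ and, on $C$, let $\mathscr G(C)$ be the filter generated by
\[
\mathfrak B(C)=\{\,A\cap N \mid A\in\mathfrak F(C),\ N\in\mathscr N_p(C)\,\}.
\]
The first task is to check that $\mathfrak B(C)$ is a filter base on $C$. Closure under binary intersection up to refinement $(B_1)$ is immediate from $(A_1\cap N_1)\cap(A_2\cap N_2)=(A_1\cap A_2)\cap(N_1\cap N_2)$, using that $\mathfrak F(C)$ is closed under finite intersections and that $\mathscr N_p(C)$ is as well, since it is itself a filter by the preceding theorem. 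For $(B_2)$: the family is nonempty because the maximal sieve $Hom_{\mathscr C}(-,C)$ belongs to both $\mathfrak F(C)$ (it is the empty intersection of members of $\mathfrak F(C)$, equivalently $\mathfrak F(C)$ is nonempty and upward closed) and $\mathscr N_p(C)$ (the pair $id_C$ and the point $q=p$ witnesses that $Hom_{\mathscr C}(-,C)$ is a $\mathfrak G$-neighborhood of $p$); and the empty sieve is not in $\mathfrak B(C)$ exactly because $p$ is a cluster point of $\mathfrak F(C)$, so $A\cap N\neq\varnothing$ for all $A\in\mathfrak F(C)$ and $N\in\mathscr N_p(C)$. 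The proposition that a filter base generates a filter then gives that $\mathscr G(C)$ is a filter, and since all filter axioms are imposed object by object, $\mathscr G$ is a filter on $\mathscr C$.

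It then remains to verify that $\mathscr G$ has the two required properties. It is finer than $\mathfrak F$: for $C'\neq C$ this is equality, and for $C$ each $A\in\mathfrak F(C)$ equals $A\cap Hom_{\mathscr C}(-,C)\in\mathfrak B(C)\subseteq\mathscr G(C)$. And $\mathscr G(C)$ converges to $p$: each $N\in\mathscr N_p(C)$ equals $Hom_{\mathscr C}(-,C)\cap N\in\mathfrak B(C)\subseteq\mathscr G(C)$, so $\mathscr N_p(C)\subseteq\mathscr G(C)$, which is the definition of convergence to $p$. The only genuine obstacle I anticipate is the nonemptiness clause in $(B_2)$ — that $\mathfrak B(C)$ contains no empty sieve — which is exactly the place where the cluster-point hypothesis is indispensable; care should be taken that ``every cover-neighborhood of $p$ meets $A$'' is read as ``$A\cap N$ is a nonempty set of morphisms'', in harmony with axiom $(F_3)$.
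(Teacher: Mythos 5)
Your proposal is correct and follows essentially the same route as the paper: the converse direction is the same two-line argument from $(F_2)$ and $(F_3)$, and the forward direction builds the same filter base of intersections $A\cap V$ (with $A\in\mathfrak F(C)$ and $V$ a neighborhood of $p$) and takes the generated filter. Your write-up is in fact somewhat more careful than the paper's --- you specify $\mathscr G$ on objects other than $C$, use the maximal sieve to verify nonemptiness and the inclusions $\mathfrak F(C)\subseteq\mathscr G(C)$ and $\mathscr N_p(C)\subseteq\mathscr G(C)$, and avoid the paper's $\cap/\cup$ typos in the $(B_1)$ check --- but the underlying argument is the same.
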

\begin{proof}
Let us begin by assuming that the point $p$ of $C$ is a cluster point of  $\mathfrak F(C)$; from definition \ref{converges}, it follows that for each sieve $A$ in $\mathfrak F(C)$, every $\mathfrak{G}$-neighborhood $V$ of $p$ meets $A$. We need to show that the collection 
\[
\mathscr B(C) =\{A\cap V\mid V \,\ \text{is a $\mathfrak{G}$-neighborhood of}\,\ p\}
\]
define  a base for a filter  $\mathscr G$ finer than $\mathfrak F$.in such a way that  $\mathscr G(C) $ {\bf converges} to $p$.

Indeed,
\begin{enumerate}
\item[($B_1)$] Let $A\cap V$,\,\ $A\cup W$ two elements of the collection $\mathscr B(C)$, since 
$$(A\cap V) \cap (A\cup W)= A\cup (V\cap W) $$
and $V\cap W$ is a $\mathfrak{G}$-neighborhood of $p$, there exists $U$,  a $\mathfrak{G}$-neighborhood of $p$ such that
$$U\subseteq V\cap W,$$ and clearly $A\cap U \in \mathscr B(C)$;
\item[$(B_2)$] Obviously $\mathscr B(C)$ is not empty, and the empty sieve is not in $\mathscr B(C)$.
\end{enumerate}
Now, if $\mathscr G$ is the filter generated by $\mathscr B$ then  $\mathscr G$ is finer than  $\mathfrak F$, and $\mathscr G(C)$ naturally converges to $p$.

Conversely, if there is a filter  $\mathscr G$ finer than  $\mathfrak F$ such that $\mathscr G(C) $ {\bf converges} to $p$ then each sieve $R$ in $\mathfrak F(C)$ and each $\mathfrak{G}$-neighborhood $U$ of $p$ belongs to  $\mathscr G$ and hence meet, so the point $p$ of $C$ is a cluster point of  $\mathfrak F(C)$.
\end{proof}
\begin{theorem}
Let  $(\mathscr C, J)$ be a  category endowed with a Grothendieck topology; let  $C$  be an object  of $\mathscr{C}$ and let $A$ be a sieve on  $C$. The point $p$ of $C$ lies in the closure  of $A$ iif there is a filter  $\mathscr G $ such that  $A\in\mathscr G(C) $ and $\mathscr G(C) $ {\bf converges} to $p$.
\end{theorem}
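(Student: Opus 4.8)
The statement is the "closure" analogue of the preceding cluster‑point theorem, and the plan is to run essentially the same argument with the r\^ole of the family $\mathfrak F(C)$ played by the single sieve $A$. For the forward implication, suppose $p$ lies in the closure of $A$, so that by Definition \ref{converges}(2) every cover‑neighborhood of $p$ meets $A$. I would consider the collection
\[
\mathscr B(C)=\{\, V\cap A \mid V\in \mathscr N_p(C)\,\}
\]
and verify that it is a basis of a filter on $C$ in the sense of the relevant definition. Axiom $(B_1)$ holds because $(V_1\cap A)\cap(V_2\cap A)=(V_1\cap V_2)\cap A$ and $V_1\cap V_2\in\mathscr N_p(C)$, the latter by the first theorem of this section, which asserts that $(C,\mathscr N_p(C))$ is a filtered object and hence that $\mathscr N_p(C)$ is closed under finite intersections; so the intersection of two members of $\mathscr B(C)$ is again a member of $\mathscr B(C)$. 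Axiom $(B_2)$ holds because $\mathscr N_p(C)$ is nonempty — the maximal sieve $Hom_{\mathscr C}(-,C)$ lies in $J(C)$ and is a $\mathfrak{G}$‑neighborhood of $p$ (take $D=1$, $\phi=p$ and $q=\mathrm{id}_1$) — while the empty sieve is excluded from $\mathscr B(C)$ precisely because, by hypothesis, every cover‑neighborhood $V$ of $p$ meets $A$, i.e. $V\cap A$ is never the empty sieve.

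Letting $\mathscr G$ be the filter generated by $\mathscr B$ (Proposition on the filter generated by a basis), I would then check the two required properties at $C$: $A\in\mathscr G(C)$, since picking any $V\in\mathscr N_p(C)$ gives the base element $V\cap A\subseteq A$; and $\mathscr N_p(C)\subseteq\mathscr G(C)$, since for each $V\in\mathscr N_p(C)$ the base element $V\cap A$ is contained in $V$. The latter inclusion is exactly the statement that $\mathscr G(C)$ converges to $p$ (Definition \ref{converges}(1)). On the remaining objects $D$ of $\mathscr C$ one simply sets $\mathscr G(D)$ to be the trivial filter whose only member is the maximal sieve $Hom_{\mathscr C}(-,D)$ (which clearly satisfies $(F_1)$–$(F_3)$); this choice is irrelevant to the conclusion, which only constrains $\mathscr G$ at $C$.

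For the converse, assume $\mathscr G$ is a filter with $A\in\mathscr G(C)$ and $\mathscr G(C)$ converging to $p$, i.e. $\mathscr N_p(C)\subseteq\mathscr G(C)$. Given an arbitrary cover‑neighborhood $V$ of $p$, we have $V\in\mathscr G(C)$ and $A\in\mathscr G(C)$, hence $V\cap A\in\mathscr G(C)$ by $(F_2)$, and therefore $V\cap A$ is not the empty sieve by $(F_3)$; so $V$ meets $A$. As $V$ was arbitrary, $p$ lies in the closure of $A$.

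I do not anticipate a real obstacle: the proof is a transcription of the cluster‑point theorem, and the only point that needs care is the verification of $(B_2)$, since it is there — in excluding the empty sieve from $\mathscr B(C)$ — that the closure hypothesis is actually used; a secondary bookkeeping point is the need to extend $\mathscr G$ to all objects of $\mathscr C$, which the trivial choice above settles.
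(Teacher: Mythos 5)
Your proposal is correct and follows essentially the same route as the paper: the forward direction builds the same basis $\{A\cap V\}$ at $C$ and takes the generated filter, which contains $A$ and converges to $p$. The only difference is cosmetic — for the converse the paper cites the preceding cluster-point theorem, while you verify directly via $(F_2)$ and $(F_3)$ that every cover-neighborhood meets $A$, and you additionally fill in details the paper leaves implicit (the basis axioms, nonemptiness of $\mathscr N_p(C)$, and the extension of $\mathscr G$ to the other objects).
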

\begin{proof}
Let us begin by assuming that The point $p$ of $C$ lies in the closure  of $A$; from definition \ref{converges}, it follows that  every $\mathfrak{G}$-neighborhood $V$ of $p$ meets $A$. Then 
\[
\mathscr B(C) =\{A\cap V\mid V \,\ \text{is a $\mathfrak{G}$-neighborhood of}\,\ p\}
\]
is a base  for a filter  $\mathscr G$, in such a way that  $\mathscr G(C) $ {\bf converges} to $p$.

Conversely, if  $A \in \mathscr G(C)$ and  $\mathscr G(C) $ {\bf converges} to $p$ then  $p$ is a cluster point  of $\mathscr G(C) $ and hence $p$ lies in the closure  of $A$.
\end{proof}

\begin{ex}
Let $\mathcal A$ be a complete Heyting algebra and regard  $\mathcal A$  as a category in the usual way. 
\begin{itemize}
\item \cite{MM} Then  $\mathcal A$ can be equipped with a base for a Grothendieck topology $K$, given by
\[
\{a_i\mid i\in I\}\in K(c)\,\ \text{if and only if}\,\ \bigvee_{i\in I}=c,
\]
where $\{a_i\mid i\in I\}\subseteq \mathcal A$ and $c\in \mathcal A$.

\item A sieve $S$ on an element $c$ of  $\mathcal A$ is just a subset of elements $b\leqslant c$ such that $a\leqslant b\in S$ implies $a\in S$.
\item In the Grothendieck topology $J$ with basis $K$, a sieve $S$ on $c$ covers $c$  iff
\[
\bigvee S=c.
\]
\item A filter on t$\mathcal A$ is a function $\mathfrak F$ which assigns to each element $c$ of $\mathcal A$ a  collection $\mathfrak F(c) $ of sieves, such that 
\begin{enumerate}
\item [($F_1$)] If $S \in \mathfrak F(c)$ and $ R$ is a sieve on $c$ such that $ S \subseteq R$, then  $ R \in \mathfrak F(c)$;
\item [($F_2$)] every finite intersection of sieves of $\mathfrak F(c)$ belongs to $\mathfrak F(c)$;
\item [($F_3$)] the empty sieve is not in $\mathfrak F(c)$.
\end{enumerate}
\item An immediate consequence of the previous construction of  a Grothendieck topology and a fiter on $\mathcal A$ is that 

\[
\mathfrak F(c)\,\ \text{converges to $c$ iff}\,\   \bigvee S=c, \,\ \text{for each}\,\  S \in \mathfrak F(c).
\]
\end{itemize}
\end{ex}
\section*{Conclusions and Comments}
We proposed the construction of the concepts of filter, $\mathfrak{G}$-neighbor\-hood of a point and  cover-neighborhood with the aim of studying convergence, cluster point and closure of  sieves on objects of some kind of categories.
The properties  of these objects (similar to those general topology) are  postponed to future works.
\section*{Acknowledgments}.
 The author wish to thank \emph{ Fundaci\'on Haiko} of Colombia for its constant encouragement during the development of this work.

\end{document}